\documentclass[12pt]{amsart}
\usepackage{amssymb, amsmath}
\title{On conjugate separability of nilpotent subgroups}
\author{M. Shahryari}
\address{M. Shahryari: Department of  Mathematics,  College of Science, Sultan Qaboos University, Muscat, Oman}
\email{m.ghalehlar@squ.edu.om}

\markright{\protect Conjugate separability of nilpotent subgroups}
\pagestyle{myheadings}

\newtheorem{example}{Example}
\newtheorem{corollary}{Corollary}
\newtheorem{proposition}{Proposition}

\newtheorem {theorem}{Theorem}
\newtheorem{lemma}{Lemma}
\newtheorem{remark}{Remark}

\numberwithin{equation}{section}

\newcommand{\NTk}{\mathrm{NT}_k}
\newcommand{\CSNk}{\mathrm{CSN}_k}
\newcommand{\nilk}{\mathfrak{N}_k}

\begin{document}

\maketitle
\begin{abstract}
We study  groups, all maximal nilpotent subgroups of class at most $k$ in which are malnormal. We show that such groups share many similar properties with the  ordinary CSA groups. Similarly, we introduce the class of {\em nilpotency transitive} groups $\NTk$,  and we show that in presence of a special residuality condition, these two concepts are  equivalent. As a result, we see that the theory of CSA and CT groups is a small part of a more general idea.
\end{abstract}
\vspace{1cm}

{\bf AMS Subject Classification} 20E06, 20E26, 20F70.\\
{\bf Keywords} CSA groups; commutative transitive groups; nilpotent groups; residually free groups;
  universal theory; free product.

\vspace{1cm}

A group $G$ is called CSA (conjugately  separated abelian) if every maximal abelian subgroup  of $G$ is {\em malnormal}. This means that if $H$ is a maximal abelian subgroup of $G$ and $x\in G\setminus H$ then $H\cap H^x=1$. The class of CSA groups is quite wide and has very serious roles in the study of residually free groups, universal theory of non-abelian free groups, limit groups, exponential groups and equational domains in algebraic geometry over groups (see \cite{BMR}, \cite{Champ}, \cite{MR1} , and \cite{MR2}). Another class of groups which has been studied extensively is the class of CT (commutative transitive) groups. A group is CT if commutativity is a transitive relation on the set of its non-identity elements. Despite this simple definition, the class of CT groups has also a crucial role in the study of residually free groups and so it has a close connection with CSA groups. Every CSA group is CT but the converse is not true. In the presence of residual freeness, both properties are equivalent, a theorem which is proved by B. Baumslag (see \cite{BB}).

During the past few decades, there have been many attempts to study these classes and their generalizations. A generalization of CT groups is introduced in \cite{Ciob} to extend the above mentioned theorem of B. Baumslag. Many interesting relations between CSA and CT groups are presented in \cite{Fine} as well as  an excellent account of the previous works.

It seems that the idea of CT and CSA groups is a small part of a very general concept. Suppose $\mathfrak{X}$ is a variety of  groups (it can even be a universal class or even an inductive class of groups closed under subgroup). A group $G$ can be called $\mathfrak{X}T$ then,  if and only if for any two $\mathfrak{X}$-subgroups $K_1, K_2\leq G$ the assumption $K_1\cap K_2\neq 1$ implies that $\langle K_1, K_2\rangle$ is also an $\mathfrak{X}$-group. Similarly, we  call a group $G$ a $CS\mathfrak{X}$ group if  all of its maximal $\mathfrak{X}$-subgroups  are malnormal. 

Although it seems that most parts of the work in this article can be developed  for many general classes $\mathfrak{X}$, we focus only on the variety of nilpotent groups of class at most $k$. Let's denote this variety by $\nilk$. Hence, we call a group $\NTk$ (nilpotency transitive of class $k$) if for any two $\nilk$-subgroups $K_1$ and $K_2$, the assumption $K_1\cap K_2\neq 1$ implies that $ \langle K_1, K_2\rangle$ is nilpotent of class at most $k$. Also  a group $G$ is $\CSNk$ (conjugately separated nilpotent of class $k$) if and only if every maximal $\nilk$-subgroup of $G$ is malnormal. The case $k=1$  obviously coincides with the ordinary CT and CSA groups. However, there is no implications of the form $\NTk \rightarrow CT$ or $\CSNk \rightarrow CSA$ (the second implication is not true as not every maximal abelian subgroup is  necessarily a maximal $\nilk$-subgroup).  Despite this difference, we will  show that these classes share many similar properties with the classical cases of CT and  CSA groups. After a detailed study of basic properties of $\NTk$ and $\CSNk$ groups, as the main result,  we will show that in presence of a special residuality condition, these two classes are the same. In order to do this, we will introduce a natural generalization of the concept of free group. Suppose $A$ is a finitely generated free element of the variety $\nilk$. Then every free product of a set of copies of $A$ will be called an $A$-free group. Obviously, if $A=\mathbb{Z}$ then we get the ordinary free groups.   We will show that in the class of residually $A$-free groups, the notions of $\NTk$ and $\CSNk$ groups are equivalent.

A few words are needed here to clarify our notation: all simple commutators $[ x_1, x_2, \ldots, x_{k+1}]$ are left aligned. The subgroup generated by a subset $X\subseteq G$ will be denoted by $\langle X\rangle$ and the normal closure of this subgroup is $\langle X^G\rangle$. A conjugate $a^x$ (or $H^x$) is $x^{-1}ax$ (similarly, $x^{-1}Hx$). For a class $\{ G_i\}_{i\in I}$ of groups, the corresponding free product will be denoted by $\ast_{i\in I}G_i$. For a group $G$ the notation $\mathrm{Th}_{\forall}(G)$ will be used for the set of all universal first order sentences true in $G$.

\section{Basic properties}
We begin this section with the following natural relation.

\begin{proposition}\label{first}
Every $\CSNk$ group is $\NTk$.
\end{proposition}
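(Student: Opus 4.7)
The plan is to extend both $K_1$ and $K_2$ to maximal $\nilk$-subgroups, show these two maximal subgroups coincide, and then conclude that $\langle K_1, K_2\rangle$ sits inside a single $\nilk$-subgroup. First I would invoke Zorn's lemma to obtain maximal $\nilk$-subgroups $M_1 \supseteq K_1$ and $M_2 \supseteq K_2$; this is legitimate because class $\leq k$ nilpotency is captured by the universal identity $[x_1,\dots,x_{k+1}] = 1$, so the union of any chain of $\nilk$-subgroups is again in $\nilk$. Set $H = M_1 \cap M_2$; then $H \supseteq K_1 \cap K_2 \neq 1$.

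The crux is to show $M_1 = M_2$, for then $\langle K_1, K_2\rangle \subseteq M_1$ is nilpotent of class $\leq k$, as required. Suppose for contradiction that $H \subsetneq M_2$. Since $M_2$ is nilpotent, it satisfies the normalizer condition (every proper subgroup is strictly contained in its normalizer), so $N_{M_2}(H) \supsetneq H$, and I can pick $g \in N_{M_2}(H) \setminus H$. On the one hand, $g \in M_2$ and $g \notin H = M_1 \cap M_2$ together force $g \notin M_1$. On the other hand, $g$ normalises $H$, so for any $1 \neq a \in H$ we have $a \in M_1$ and $a^g \in H^g = H \subseteq M_1$. Malnormality of $M_1$ then forces $g \in M_1$, a contradiction. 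Hence $H = M_2$, i.e.\ $M_2 \subseteq M_1$, and by symmetry $M_1 \subseteq M_2$, so $M_1 = M_2$.

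I do not anticipate any serious technical obstacle. The argument is the natural analogue of the classical proof that CSA $\Rightarrow$ CT, with the commutativity trick used there (namely $C_G(a) \supseteq M$ for a maximal abelian $M \ni a$) replaced by the normalizer condition in a nilpotent group. The only points I would verify carefully are the Zorn step, which rests on $\nilk$ being a variety and therefore closed under unions of chains of subgroups, and the consistent use of the paper's conjugation convention $a^x = x^{-1}ax$ when invoking $H^g = H$.
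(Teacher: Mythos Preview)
Your proof is correct and takes a genuinely different route from the paper's. The paper extends only $K_1$ to a maximal $\nilk$-subgroup $H$ and then shows $K_2 \subseteq H$ by an explicit commutator descent: for $z \in K_2 \setminus H$ and a nontrivial $y \in K_1 \cap K_2$, the relation $[z,_k y] = 1$ (which holds since $\langle y,z\rangle \leq K_2 \in \nilk$) is unwound one step at a time, each step invoking malnormality of $H$ to pull the shorter commutator $[z,_{j} y]$ into $H$, until finally $[z,y] \in H$ gives $y \in H \cap H^z$ and hence $z \in H$. You instead extend both $K_i$ to maximals and replace the commutator bookkeeping by the normalizer condition in the nilpotent group $M_2$. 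Your argument is cleaner and more structural, and it makes transparent that what is really being used is that distinct maximal $\nilk$-subgroups of a $\CSNk$ group intersect trivially; the paper's argument, on the other hand, is entirely self-contained at the level of commutator identities and is precisely the template the author later points to for the locally nilpotent analogue in the supplementary section, where one only has $[z,_m y] = 1$ for some $m$ depending on the pair $y,z$.
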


\begin{proof}
Suppose $G$ is a $\CSNk$ group and $K_1, K_2\leq G$ are $\nilk$-subgroups with $K_1\cap K_2\neq 1$. By Zorn's lemma there is a maximal $\nilk$-subgroup $H$ containing $K_1$. We prove that $K_2\subseteq H$. Suppose there exists $z\in K_2\setminus H$. We have $H\cap H^z=1$ as $H$ is malnormal. Let $y\in K_1\cap K_2$ be a non-identity element. As $\langle y, z\rangle \subseteq K_2$ is $\nilk$-group, we have
$[z,_k y]=1$ and hence
$$
[z,_{k-1} y]^{-1}y^{-1}[z,_{k-1} y]y=1.
$$
This implies that $[z,_{k-1} y]^{-1}y^{-1}[z,_{k-1} y]=y^{-1}\in H$. Consequently
$$
H\cap H^{[z,_{k-1} y]}\neq 1
$$
and therefore $[z,_{k-1} y]\in H$. Similar argument shows that $[z,_{k-2} y]\in H$ and if we continue, we see that $[z, y]\in H$. In other words $y\in H\cap H^z$ and hence $z\in H$. This shows that $K_2\subseteq H$ and hence $\langle K_1, K_2\rangle \subseteq H$ is $\nilk$-subgroup.
\end{proof}

The next result shows that non-$\nilk$-groups with the property $\NTk$ are indecomposable.

\begin{proposition}
Suppose $G$ is an $\NTk$ group. Then\\

1- For every pair of distinct maximal $\nilk$-subgroups $H_1$ and $H_2$ we have $H_1\cap H_2=1$.

2- The group $G$ is directly indecomposable except in the case when it is $\nilk$.
\end{proposition}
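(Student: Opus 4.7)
The plan is to handle the two parts separately: part (1) is essentially immediate from the definition of $\NTk$, while part (2) reduces, via a short bridging argument, to the fact that membership in $\nilk$ is governed by a single identity.

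For part (1), I would argue directly. If two distinct maximal $\nilk$-subgroups $H_1, H_2$ of $G$ satisfied $H_1\cap H_2\neq 1$, then by the $\NTk$ hypothesis applied to $K_1=H_1$ and $K_2=H_2$, the join $\langle H_1,H_2\rangle$ would itself be a $\nilk$-subgroup. Since $H_1\neq H_2$ forces $H_2\not\subseteq H_1$, this join would strictly contain $H_1$, contradicting the maximality of $H_1$ among $\nilk$-subgroups.

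For part (2), I would prove the contrapositive: assume $G=A\times B$ with both factors nontrivial and show $G\in\nilk$. Fix any nontrivial element $b\in B$. Because $B$ centralizes $A$, the subgroup $\langle a,b\rangle$ is abelian, hence $\nilk$, for every $a\in A$. The core step is then an induction on $n$ showing that $\langle a_1,\ldots,a_n,b\rangle$ is $\nilk$ for every finite tuple $a_1,\ldots,a_n\in A$. At the inductive step, both $\langle a_1,\ldots,a_n,b\rangle$ and $\langle a_{n+1},b\rangle$ contain $b\neq 1$, so $\NTk$ applied to these two $\nilk$-subgroups yields that their join $\langle a_1,\ldots,a_{n+1},b\rangle$ lies in $\nilk$. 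Consequently every finitely generated subgroup of $A$ embeds in a $\nilk$-group, hence is itself $\nilk$.

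To finish, I would invoke that nilpotency of class at most $k$ is captured by the single law $[x_1,\ldots,x_{k+1}]=1$, which can be tested on $(k+1)$-generated subgroups; therefore $A\in\nilk$, and by the symmetric argument $B\in\nilk$. Since $\nilk$ is closed under direct products, $G=A\times B$ is $\nilk$, as required. There is no real obstacle here beyond identifying the correct ``bridge'' device: the fixed element $b\in B$ is exactly what is needed to manufacture nontrivially intersecting abelian subgroups spanning arbitrary elements of $A$, which is precisely the input the $\NTk$ hypothesis consumes.
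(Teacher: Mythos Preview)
Your proof is correct and follows essentially the same approach as the paper. For part (2) the paper is marginally more direct: instead of inducting on $n$ and passing through ``locally $\nilk$'', it fixes $k+1$ elements $x_1,\ldots,x_{k+1}\in A$ at the outset, applies $\NTk$ to the abelian subgroups $\langle x_i,y\rangle$ (all containing the bridge element $y\in B\setminus\{1\}$) to obtain $\langle x_1,\ldots,x_{k+1},y\rangle\in\nilk$, and reads off $[x_1,\ldots,x_{k+1}]=1$ immediately.
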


\begin{proof}
To prove part 1, suppose $H_1\cap H_2\neq 1$. Then $\langle H_1, H_2\rangle$ is $\nilk$-subgroup and by maximality $H_1=\langle H_1, H_2\rangle =H_2$. For the part 2, suppose $G=A\times B$ for some non-trivial subgroups $A$ and $B$. Let $x_1, \ldots, x_{k+1}\in A$ and $1\neq y\in B$ be arbitrary elements. For any $i$ we have $[x_i, y]=1$ and hence all subgroups $\langle x_i, y\rangle$ are abelian. This shows that  $\langle x_1, \ldots, x_{k+1}, y\rangle$ is a $\nilk$-subgroup and hence $[x_1, \ldots, x_{k+1}]=1$. Hence, $A$ is a $\nilk$-group. Similarly $B$ is a $\nilk$-group and so is $G$.
\end{proof}

The converse of the part 1 is also true.

\begin{proposition}
Suppose for every pair of distinct maximal $\nilk$-subgroups $H_1$ and $H_2$ we have $H_1\cap H_2=1$. Then $G$ is $\NTk$.
\end{proposition}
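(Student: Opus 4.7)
The plan is to run the converse of the extension argument used implicitly in Proposition~1: extend the given $\nilk$-subgroups to maximal ones via Zorn's lemma, observe that they must coincide by the hypothesis, and then conclude that their join is $\nilk$.

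More precisely, suppose $K_1, K_2 \leq G$ are two $\nilk$-subgroups with $K_1 \cap K_2 \neq 1$. First I would invoke Zorn's lemma to enlarge $K_1$ to a maximal $\nilk$-subgroup $H_1$ and $K_2$ to a maximal $\nilk$-subgroup $H_2$. The application of Zorn requires checking that the union of a chain of $\nilk$-subgroups is again a $\nilk$-subgroup, which is immediate because being nilpotent of class at most $k$ is expressible by the universal identity $[x_1,\ldots,x_{k+1}] = 1$, so the class $\nilk$ is closed under directed unions.

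Next, since $K_1 \subseteq H_1$ and $K_2 \subseteq H_2$, we have $1 \neq K_1 \cap K_2 \subseteq H_1 \cap H_2$. By the standing hypothesis, this forces $H_1 = H_2$; call this common maximal $\nilk$-subgroup $H$. Then $\langle K_1, K_2 \rangle \subseteq H$, and since $\nilk$ is a variety (hence closed under taking subgroups), $\langle K_1, K_2 \rangle$ is itself a $\nilk$-group. This is exactly the defining property of $\NTk$.

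The argument is essentially immediate once Zorn's lemma has supplied maximal $\nilk$-enclosures of $K_1$ and $K_2$; there is no serious obstacle to overcome, and in particular the delicate commutator manipulation that appeared in the proof of Proposition~\ref{first} is not needed here, because we are using the intersection hypothesis rather than malnormality.
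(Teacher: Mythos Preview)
Your proof is correct and follows essentially the same approach as the paper: extend each $K_i$ to a maximal $\nilk$-subgroup $H_i$, use the hypothesis to force $H_1=H_2$, and conclude that $\langle K_1,K_2\rangle$ lies in this common $\nilk$-subgroup. Your additional justification of Zorn's lemma via the universal identity defining $\nilk$ is a reasonable elaboration that the paper leaves implicit.
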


\begin{proof}
Suppose $K_1$ and $K_2$ are $\nilk$-subgroups of $G$ and $K_1\cap K_2\neq 1$. Let $H_i$ be a maximal $\nilk$-subgroup containing $K_i$ for $i=1, 2$. Then $K_1\cap K_2\subseteq H_1\cap H_2$ and so $H_1=H_2$. Therefore $\langle K_1, K_2\rangle \subseteq H_1$ and hence it is a $\nilk$-subgroup, showing that $G$ is $\NTk$.
\end{proof}

Centralizers play a crucial role in the study of ordinary CT and CSA groups. One may see the important link between CSA groups and extension by the centralizers in the fundamental works of Myasnikov and Remeslennikov (see \cite{MR1} and \cite{MR2}). In the case of $\NTk$ groups, we need the following definition. For every element $x$ in a group $G$ we define a subset
$$
C^k_G(x)=\{ y\in G:\ \langle x, y\rangle \in \nilk\}.
$$
Note that this set contains the centralizer $C_G(x)$ but of course it is not a subgroup in general. However, the situation will be changed in the presence of the property $\NTk$.

\begin{proposition}
A group $G$ is $\NTk$ if and only if for any $x\neq 1$, the set $C^k_G(x)$ is a $\nilk$-subgroup. If $G$ is $\NTk$ then for all $x\neq 1$ the subgroup $C^k_G(x)$ is a maximal $\nilk$-subgroup and every maximal $\nilk$-subgroup has this form.
\end{proposition}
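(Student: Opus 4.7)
The plan is to prove both directions of the equivalence and the maximality/exhaustiveness claims using a single mechanism: repeated application of the $\NTk$ axiom to subgroups of the form $\langle x,y_i\rangle$, all of which share the nonidentity anchor $x$, so their pairwise joins remain in $\nilk$.

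For the forward implication, I fix $x\neq 1$ and build up $C^k_G(x)$ two generators at a time. Closure under inversion is immediate since $\langle x,y^{-1}\rangle=\langle x,y\rangle$. For closure under multiplication, given $y_1,y_2\in C^k_G(x)$, the subgroups $\langle x,y_1\rangle$ and $\langle x,y_2\rangle$ both lie in $\nilk$ and intersect in $\langle x\rangle\neq 1$, so $\NTk$ gives $\langle x,y_1,y_2\rangle\in\nilk$ and hence $y_1y_2\in C^k_G(x)$. Iterating the same join along a chain, for any $y_0,\dots,y_k\in C^k_G(x)$ the subgroup $\langle x,y_0,\dots,y_k\rangle$ is in $\nilk$, so $[y_0,\dots,y_k]=1$; thus $C^k_G(x)$ is not only a subgroup but an element of $\nilk$. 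The converse is even shorter: if $K_1,K_2\in\nilk$ meet at some $x\neq 1$, then by definition $K_1\cup K_2\subseteq C^k_G(x)$, and the hypothesis that this set is a $\nilk$-subgroup forces $\langle K_1,K_2\rangle\in\nilk$.

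For the second sentence, I assume $G$ is $\NTk$ and use the same observation in both directions. If $H\in\nilk$ contains $C^k_G(x)$, then $x\in H$ and every $y\in H$ satisfies $\langle x,y\rangle\leq H\in\nilk$, so $y\in C^k_G(x)$; this gives $H=C^k_G(x)$ and establishes maximality. For exhaustiveness, given any maximal $\nilk$-subgroup $M$, I pick $x\in M\setminus\{1\}$ and note that every $y\in M$ again satisfies $\langle x,y\rangle\leq M\in\nilk$, so $M\subseteq C^k_G(x)$; maximality of $M$ together with $C^k_G(x)\in\nilk$ from the first part then forces equality. I do not anticipate a real obstacle here: the only delicate point is to make sure that the iterated application of $\NTk$ really does propagate nilpotency along a chain of joins, which works precisely because the common element $x$ persists as a nontrivial witness of intersection at every step.
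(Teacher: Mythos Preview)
Your argument is correct and follows essentially the same route as the paper: both prove closure of $C^k_G(x)$ via the join $\langle x,y_1,y_2\rangle$ (the paper writes $y_1^{-1}y_2$ where you split into inversion and product), both obtain the $\nilk$-property of $C^k_G(x)$ by iteratively joining the $\langle x,y_i\rangle$ through the common anchor $x$, and both handle the converse and the maximality/exhaustiveness claims exactly as you do. If anything, your explicit mention of the chain of joins is slightly more careful than the paper, which simply asserts that $\langle x,x_1,\ldots,x_{k+1}\rangle$ is $\nilk$ ``using the $\NTk$ property'' without spelling out the iteration.
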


\begin{proof}
Let $G$ be $\NTk$ and $x\neq 1$. First we show that $C^k_G(x)$ is a subgroup. Suppose $y_1, y_2\in C^k_G(x)$. Then the subgroups $\langle x, y_1\rangle$ and $\langle x, y_2\rangle$ are $\nilk$ with non-trivial intersection. Hence $\langle x, y_1, y_2\rangle$ is a $\nilk$-subgroup which shows that $\langle x, y_1^{-1}y_2\rangle$ is also $\nilk$. This means that $C^k_G(x)$ is a subgroup. To prove that it is a $\nilk$-subgroup, suppose $x_1, \ldots, x_{k+1}\in C^k_G(x)$. Then using the $\NTk$ property of $G$ the subgroup $\langle x, x_1, \ldots, x_{k+1}\rangle$ must be $\nilk$ and hence $C^k_G(x)$ is a $\nilk$-subgroup.

Conversely, suppose for all $x\neq 1$, the set $C^k_G(x)$ is a $\nilk$-subgroup. We show that $G$ is $\NTk$. Let $K_1$ and $K_2$ be $\nilk$-subgroups of $G$ with non-trivial intersection. Suppose $x\in K_1\cap K_2$ is a non-identity element. Then for all $y\in K_1$ we have $\langle x, y\rangle \subseteq K_1$ and hence it is $\nilk$, showing that $y\in C^k_G(x)$. As a result both $K_1$ and $K_2$ are contained in $C^k_G(x)$ and hence $\langle K_1, K_2\rangle$ is $\nilk$-subgroup. Consequently, $G$ is $\NTk$.

Now, suppose $G$ is $\NTk$ and $x\neq 1$. Let $C^k_G(x)\subseteq K$ where $K$ is a $\nilk$-subgroup. If $y\in K$ then $\langle x, y\rangle \subseteq K$ and therefore $y\in C^k_G(x)$. Hence $C^k_G(x)$ is a maximal $\nilk$-subgroup. To show that every maximal $\nilk$-subgroup has this form, let $H$ be an arbitrary maximal $\nilk$-subgroup. Let $x\in H$ be non-identity. Then for all $y\in H$ the subgroup $\langle x, y\rangle$ is $\nilk$ and hence $y\in C^k_G(x)$ which shows that $H=C^k_G(x)$.
\end{proof}

As a result we have the following.

\begin{proposition}
Let $G$ be a $\CSNk$ group and $H$ be a subgroup. Then $H$ is also $\CSNk$.
\end{proposition}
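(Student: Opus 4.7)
The plan is to lift a maximal $\nilk$-subgroup of $H$ to a maximal $\nilk$-subgroup of $G$ and then inherit malnormality downward.

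First I would let $M$ be an arbitrary maximal $\nilk$-subgroup of $H$. Since $\nilk$ is a variety defined by the single universal identity $[x_1,\ldots,x_{k+1}]=1$, the union of any chain of $\nilk$-subgroups of $G$ is again a $\nilk$-subgroup; hence Zorn's lemma applies and $M$ is contained in some maximal $\nilk$-subgroup $\widetilde{M}$ of $G$.

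The key observation is that $M = \widetilde{M}\cap H$. The inclusion $M\subseteq \widetilde{M}\cap H$ is immediate, and conversely $\widetilde{M}\cap H$ is a $\nilk$-subgroup of $H$ containing $M$, so by maximality of $M$ in $H$ the two are equal. Now let $x\in H\setminus M$. If $x$ were in $\widetilde{M}$, then $x\in \widetilde{M}\cap H = M$, contradicting the choice of $x$. Therefore $x\in G\setminus \widetilde{M}$, and malnormality of $\widetilde{M}$ in $G$ gives $\widetilde{M}\cap \widetilde{M}^x = 1$. Consequently
$$
M\cap M^x \;\subseteq\; \widetilde{M}\cap \widetilde{M}^x \;=\; 1,
$$
so $M$ is malnormal in $H$, and since $M$ was arbitrary, $H$ is $\CSNk$.

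The only nontrivial step is the Zorn's lemma application, but it is routine once one notes that $\nilk$ is an inductive (in fact equational) class; everything else reduces to the identity $M=\widetilde{M}\cap H$, which is really what makes the argument go through. No obstacle beyond this is expected.
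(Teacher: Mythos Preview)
Your argument is correct. Both you and the paper lift a maximal $\nilk$-subgroup of $H$ to a maximal $\nilk$-subgroup of $G$ and exploit malnormality there, so the overall strategy is the same. The execution differs: the paper first invokes the preceding proposition to write $K=C^k_H(x)$ (which in turn requires knowing that $H$ is $\NTk$), assumes $K\cap K^y\neq 1$ for some $y\in H\setminus K$, deduces $y$ lies in the ambient maximal $\nilk$-subgroup, and then uses the $C^k_H$-description to force $y\in K$. Your route via the identity $M=\widetilde{M}\cap H$ is more direct and self-contained: it bypasses the $C^k$-characterization entirely and uses only Zorn together with subgroup-closure of $\nilk$. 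The payoff is that your argument works verbatim for any inductive, subgroup-closed class $\mathfrak{X}$, which is in the spirit of the paper's closing remarks; the paper's version, while equally short, leans on machinery specific to the $\nilk$ setting developed just before.
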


\begin{proof}
Let $K\leq H$ be a maximal $\nilk$-subgroup. As $H$ is $\NTk$, we have $K=C^k_H(x)$ for some non-identity $x\in H$. Let $y\in H\setminus K$ and $K\cap K^y\neq 1$. Let $M\leq G$ be a maximal $\nilk$-subgroup which contains $K$. Then $M\cap M^y$ contains $K\cap K^y$ and hence $y\in M$ as $G$ is $\CSNk$. On the other hand, $x\in K\subseteq M$, so $\langle x, y\rangle$ is a $\nilk$-subgroup which means that $y\in C^k_H(x)=K$, a contradiction. Therefore $K\cap K^y=1$ and $H$ is $\CSNk$.
\end{proof}

More is true for the classes of $\NTk$ and $\CSNk$ groups: they are both axiomatizable by universal sentences. In order to show this, first we need an easy observation from elementary group theory.
Suppose $X$ is an arbitrary subset of a group $G$. By the notation $[X,_k X]$ we denote the set of all simple commutators of length $k+1$ made by the elements of $X$. A simple argument shows that if $G=\langle X\rangle$ then $\gamma_{k+1}(G)=\langle [X,_k X]^G\rangle$ where $\gamma_{k+1}(G)$ is the $(k+1)$-th term of the lower central series. . This implies that $G$ is $\nilk$ if and only if for every generating subset $X$ we have $[X,_k X]=1$.

Now, for any elements $x$ and $y$ in a group $G$, suppose $Q(x, y)$ is the first order sentence
$$
\bigwedge_{x_1, \ldots, x_{k+1}\in \{ x, y\}}[x_1, \ldots, x_{k+1}]\approx 1.
$$
Note that we prefer to use the notation $\approx$ for equality in the first order language of groups rather the ordinary $=$. By the above observation the sentence $Q(x,y)$ is true in $G$ if and only if $\langle x, y\rangle$ is $\nilk$.

Using this notation  the following sentence
$$
(\mathrm{Subgp})\ \ \ \ \ \ \forall x \forall y_1, y_2:((x\not\approx 1\wedge Q(x, y_1)\wedge Q(x, y_2))\to Q(x, y_1^{-1}y_2))
$$
says that for all non-identity element $x$ the subset $C^k_G(x)$ is a subgroup and similarly the sentence
$$
(\mathrm{Nil})\ \forall x \forall y_1, \ldots, y_{k+1}:((x\not\approx 1)\wedge \bigwedge_{i=1}^{k+1}Q(x, y_i))\to [y_1, \ldots, y_{k+1}]\approx 1)
$$
means that $C^k_G(x)$ is nilpotent of class at most $k$ for each non-identity $x$. Consequently, the property of being $\NTk$ can be translated to the universal first order sentence $\mathrm{Subgp}+\mathrm{Nil}$. In the case of $\CSNk$ groups every  maximal $\nilk$-subgroup has the form of $C^k_G(x)$ for some non-trivial element $x$, so $G$ is $\CSNk$ if and only if it is $\NTk$ and for all $x\neq 1$ the subgroup $C^k_G(x)$ is malnormal. Hence, if we consider the sentence
$$
(\mathrm{Mal})\ \ \ \ \ \ \ \ \ \ \ \forall x, y, z:((x, y\not\approx 1\wedge Q(x, y)\wedge Q(x, y^z))\to Q(x, z))
$$
then the property of being $\CSNk$ can be described by the universal sentence $\mathrm{Subgp}+\mathrm{Nil}+\mathrm{Mal}$. As a result we have

\begin{proposition}
The classes of $\NTk$ and $\CSNk$  are universal. Hence any ultraproduct of $\NTk$ groups is $\NTk$ and any ultraproduct of $\CSNk$ groups is $\CSNk$.
\end{proposition}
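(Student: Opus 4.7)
The plan is to verify that the three universal sentences $\mathrm{Subgp}$, $\mathrm{Nil}$, $\mathrm{Mal}$ displayed just above actually axiomatize the two classes; the ultraproduct statements then follow at once from \L{}o\'s's theorem, since universal first order sentences are preserved under ultraproducts of their models.

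The essential ingredient, already noted in the preamble to the statement, is that the quantifier-free formula $Q(x,y)$ holds in $G$ if and only if $\langle x,y\rangle\in\nilk$, equivalently $y\in C^k_G(x)$. This recasts the membership condition for $C^k_G(x)$ as a quantifier-free first order condition, and is what makes universal axiomatization possible in the first place.

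I would then handle $\NTk$ as follows. By the previous proposition, $G$ is $\NTk$ iff for every $x\neq 1$ the set $C^k_G(x)$ is a $\nilk$-subgroup. The sentence $\mathrm{Subgp}$ literally says that $C^k_G(x)$ is closed under $(y_1,y_2)\mapsto y_1^{-1}y_2$; since $1\in C^k_G(x)$ trivially, this is precisely the subgroup condition. The sentence $\mathrm{Nil}$ then says that every $(k+1)$-tuple in $C^k_G(x)$ has trivial simple commutator, which by the $\gamma_{k+1}$ observation made earlier is equivalent to $C^k_G(x)\in\nilk$. Hence $\NTk$ is the class of models of $\mathrm{Subgp}\wedge\mathrm{Nil}$.

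For $\CSNk$ I would use the additional fact, also from the previous proposition, that the maximal $\nilk$-subgroups of an $\NTk$ group are exactly the subgroups $C^k_G(x)$ with $x\neq 1$. Thus $G$ is $\CSNk$ iff it is $\NTk$ and every $C^k_G(x)$ is malnormal. Unpacking $\mathrm{Mal}$: the antecedent $Q(x,y)\wedge Q(x,y^z)$ places $y$ inside $C^k_G(x)\cap C^k_G(x)^{z^{-1}}$, so malnormality together with $y\neq 1$ forces $z\in C^k_G(x)$, i.e.\ $Q(x,z)$. The main thing to be careful about, and the one I expect to double-check most, is tracking the direction of conjugation here; but there is no genuine obstacle. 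Appealing to \L{}o\'s then finishes both ultraproduct claims.
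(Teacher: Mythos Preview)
Your proposal is correct and follows essentially the same route as the paper: the paper's ``proof'' is in fact the discussion immediately preceding the proposition, which derives exactly the axiomatization $\mathrm{Subgp}+\mathrm{Nil}$ for $\NTk$ and $\mathrm{Subgp}+\mathrm{Nil}+\mathrm{Mal}$ for $\CSNk$ via the characterization of maximal $\nilk$-subgroups as the sets $C^k_G(x)$. Your treatment is if anything slightly more careful than the paper's in flagging the conjugation direction in $\mathrm{Mal}$ and in making the appeal to \L{}o\'s explicit.
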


Here we have an example of construction of new $\NTk$ groups. The similar construction of $\CSNk$ group will be described later.

\begin{example}
Suppose  $A$ and $B$ are two $\NTk$ groups. Let $G=A\ast B$ be the free product of $A$ and $B$. If $K\leq G$ then by  Kurosh subgroup theorem we have
$$
K=F[X]\ast (\ast_{i\in I}A_i^{u_i})\ast (\ast_{j\in J}B_j^{v_j})
$$
where $X$ is a subset of $G$ and $F[x]$ is the free group  generated by $X$, each $A_i$ is a subgroup of $A$ and each $B_j$ is a subgroup of $B$, and for all $i$ and $j$ we have $u_i, v_j\in G$. Suppose $K$ is an $\nilk$-group. Then we only have the following cases.\\

1- $K$ is an infinite cyclic subgroup.

2- $K=A_i^{u_i}$ for some $i\in I$.

3- $K=B_j^{v_j}$ for some $j\in J$.\\

As a result, if $K_1$ and $K_2$ are $\nilk$-subgroups of $G$ with non-trivial intersection, then in all possible combinations of the above three cases, $\langle K_1, K_2\rangle$ will be a $\nilk$-group, showing that $G$ is $\NTk$. Consequently, for any set $\Lambda$ and any ultrafilter $\mathcal{U}$ over $\Lambda$, the ultrapower
$$
G^{\ast}=\frac{(A\ast B)^{\Lambda}}{\mathcal{U}}
$$
is also $\NTk$.
\end{example}

There is one more elementary property of $\NTk$ groups. The proof is straightforward and we skip it.

\begin{proposition}
Let $G$ be an $\NTk$ group. Then we have the following.\\

1- If $G$ is torsion-free and $x^m=y^n$ for some elements $x, y\in G$ and some integers $m$ and $n$, then $\langle x, y\rangle$ is a $\nilk$-subgroup.

2- If $G$ is torsion-free and $x^n=y^n$ for some elements $x, y\in G$ and some integer $n$, then $x=y$.

3- If $G$ is not a $\nilk$-group then $Z(G)=1$, where $Z(G)$ denotes the center of $G$ and hence $G$ is not nilpotent (of any class).
\end{proposition}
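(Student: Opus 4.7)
My plan is to treat the three parts sequentially, leaning on the characterization of $\NTk$ groups via the sets $C^k_G(x)$ established in the preceding proposition, together with one classical fact about torsion-free nilpotent groups.

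For part 1, I would first dispose of the degenerate cases where $x=1$, $y=1$, $m=0$, or $n=0$; in each such case $\langle x,y\rangle$ collapses to a cyclic subgroup and is automatically in $\nilk$. In the remaining case, the cyclic subgroups $K_1=\langle x\rangle$ and $K_2=\langle y\rangle$ are abelian and hence both are $\nilk$-subgroups, and their intersection contains the common power $x^m=y^n$, which is non-trivial because $G$ is torsion-free. The $\NTk$ hypothesis then immediately gives $\langle x,y\rangle=\langle K_1,K_2\rangle\in\nilk$.

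For part 2, part 1 shows that $H=\langle x,y\rangle$ is a torsion-free $\nilk$-group, so the conclusion reduces to the classical uniqueness of $n$-th roots in torsion-free nilpotent groups. I would prove this by induction on the nilpotency class, passing to the quotient $H/Z(H)$ (still torsion-free nilpotent, of strictly smaller class) to reduce to the abelian base case, where $x^n=y^n$ yields $(xy^{-1})^n=1$ and hence $xy^{-1}=1$ by torsion-freeness; alternatively I can simply cite Mal'cev's theorem on root extraction.

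For part 3, I would argue by contradiction: assume $G\notin\nilk$ but $Z(G)\neq 1$ and pick $1\neq z\in Z(G)$. Since $z$ commutes with every $y\in G$, each subgroup $\langle z,y\rangle$ is abelian, so $y\in C^k_G(z)$ for all $y$, forcing $C^k_G(z)=G$. But the preceding proposition says $C^k_G(z)$ is itself a $\nilk$-subgroup, contradicting $G\notin\nilk$. Hence $Z(G)=1$, and since a non-trivial nilpotent group always has non-trivial center, $G$ cannot be nilpotent of any class.

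The only non-routine ingredient is the uniqueness-of-roots fact used in part 2; the rest is a short unwinding of the $\NTk$ definition together with the $C^k_G(x)$-characterization of maximal $\nilk$-subgroups.
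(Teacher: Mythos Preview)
The paper omits this proof entirely, calling it ``straightforward,'' so there is no argument to compare against; your proposal is a correct and natural elaboration of what the author presumably intended. One minor caveat: in part~1 (and likewise part~2) the statement tacitly assumes the exponents are nonzero, since $m=n=0$ imposes no relation whatsoever and the conclusion would then fail; your claim that each degenerate case collapses $\langle x,y\rangle$ to a cyclic group is therefore not literally correct when both exponents vanish simultaneously, but this is a defect in the wording of the proposition rather than in your argument.
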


The next result shows that the class $\CSNk$ is also closed under free product if we avoid groups containing involutions. This is exactly the same property as in \cite{MR2} for CSA groups and to prove it we only need to mimic the proof in the CSA case.

\begin{proposition}\label{freeprod}
Suppose $A$ and $B$ are $\CSNk$ groups without elements of order 2. Then the free product $G=A\ast B$ is also $\CSNk$.
\end{proposition}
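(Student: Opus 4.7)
The plan is to mirror the classical CSA argument from \cite{MR2}. By Proposition~\ref{first} both $A$ and $B$ are $\NTk$, so the preceding example already shows $G = A \ast B$ is $\NTk$; what remains is to verify malnormality of each maximal $\nilk$-subgroup of $G$. Using the Kurosh subgroup theorem exactly as in that example, every $\nilk$-subgroup of $G$ is either infinite cyclic or contained in a conjugate of $A$ or of $B$. Hence a maximal $\nilk$-subgroup $M$ of $G$ is of one of two types: (i) a conjugate of a maximal $\nilk$-subgroup of $A$ or of $B$, or (ii) an infinite cyclic subgroup $\langle w\rangle$, where $w$ has infinite order, is not a proper power in $G$, and is not conjugate into either free factor.

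For type (i), after conjugating I would assume $M$ is a maximal $\nilk$-subgroup of $A$. If $g \in G$ satisfies $M \cap M^g \neq 1$, then in particular $A \cap A^g \neq 1$, and the standard malnormality of a free factor in a nontrivial free product (an immediate consequence of the normal form) forces $g \in A$. Since $A$ is $\CSNk$ and $M$ is a maximal $\nilk$-subgroup of $A$, malnormality of $M$ inside $A$ gives $g \in M$; the case $M \leq B^u$ is symmetric.

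For type (ii), suppose $M = \langle w\rangle$ and $g \in G$ with $M \cap M^g \neq 1$. Choose nonzero integers $n,m$ with $w^n = g^{-1}w^m g$; since cyclically reduced syllable length is a conjugacy invariant in a free product, $|n| = |m|$, and so $g^{-1}w^n g = w^{\pm n}$. In the $+$ case $g$ centralizes $w^n$; the centralizer in $A \ast B$ of an element of infinite order not conjugate into a factor is infinite cyclic, in particular abelian, hence an $\nilk$-subgroup, and maximality of $M$ forces this centralizer to be contained in $M$, so $g \in M$. In the $-$ case $w^n$ is conjugate to $w^{-n}$; I would exclude this by the following structural lemma: \emph{in a free product of two groups, neither of which contains an involution, no element whose cyclically reduced form has syllable length at least two is conjugate to its own inverse.}

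The main obstacle is precisely this lemma, where the no-involutions hypothesis enters in an essential way. Its proof compares the cyclically reduced form of $w^n$ to the cyclic permutation of $w^{-n}$ that it would have to equal under conjugacy in a free product; a syllable-by-syllable match forces at least one syllable to coincide with its own inverse, producing an involution in $A$ or $B$, contrary to the hypothesis. Granted that lemma, the remainder of the proof is routine bookkeeping combining the Kurosh subgroup theorem, the malnormality of free factors in free products, and the $\CSNk$ hypothesis on $A$ and $B$.
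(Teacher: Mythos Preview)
Your argument is correct and follows the same Kurosh-based case split as the paper: factor-conjugate maximal $\nilk$-subgroups are handled via malnormality of the free factors together with the $\CSNk$ hypothesis on $A$ and $B$, and the infinite cyclic case reduces to $g^{-1}w^ng=w^{\pm n}$ by comparing cyclically reduced lengths. The only genuine divergence is in the $-$ sign subcase. You eliminate $g^{-1}w^ng=w^{-n}$ by the combinatorial lemma that in a free product without involutions no element of cyclically reduced syllable length $\geq 2$ is conjugate to its inverse. The paper instead observes that $g^{-1}w^ng=w^{-n}$ forces $[g^2,w^n]=1$, and since the no-involution hypothesis guarantees $g^2\neq 1$, one is back in the $+$ case with $g$ replaced by $g^2$, which places $g^2$ in $\langle w\rangle$ and hence (using that $w$ is primitive) $g\in\langle w\rangle$. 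The paper's squaring trick is shorter and sidesteps the syllable-matching argument; your route is more structural and makes the role of the no-involution hypothesis completely explicit. Both are valid, and your sketch of the lemma (a cyclic permutation matching the reversed word forces some syllable to equal its own inverse) is the right idea.
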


\begin{proof}
Suppose $H\leq G$ is a maximal  $\nilk$-subgroups. We show that $H$ is malnormal in $G$. Let $x\in G$ be an element with the property $H\cap H^x\neq 1$. Using Kurosh subgroup theorem we have the following cases. \\

1- $H\leq A^y$ for some element $y\in G$. Using a suitable conjugation we may assume that $H\leq A$. We know that if $x$ does not belong to $A$ then $A\cap A^x=1$. Hence $H\cap H^x\neq 1$ implies that $x\in A$. As $A$ was $\CSNk$, we see that $x\in H$. \\

2- $H\leq B^y$ for some element $y\in G$. This case is similar to the previous one.\\

3- $H=\langle z\rangle$ for some element $z$ of infinite order which has cyclically reduced length bigger than one. As $H\cap H^x\neq 1$, we have $x^{-1}z^nx=z^m$ for some integers $m$ and $n$. Comparing the cyclically reduced lengths we see that $m=\pm n$. If $m=n$ then $xz^n=z^nx$ and so $[x, z^n]=1$. But $H$ is a maximal cyclic subgroup, hence $x\in \langle z\rangle=H$.

Let $m=-n$. Then $x^{-1}z^nx=z^{-n}$ so $[x^2, z^n]=1$. As $x^2\neq 1$ we have again $x\in \langle z\rangle=H$.
\end{proof}

Is there any $\NTk$ group which is not $\CSNk$? The next example shows that if $A$ and $B$ have involutions then $A\ast B$ will not be $\CSNk$.

\begin{example}
Suppose $A$ and $B$ are $\nilk$-groups having elements $x\in A$ and $y\in B$ of orders $2$. Then the subgroup $H=\langle xy\rangle$ is a maximal $\nilk$-subgroup of $A\ast B$. We have $x^{-1}(xy)x=yx=(xy)^{-1}$ and hence $H\cap H^x\neq 1$. Therefore $G=A\ast B$ is not $\CSNk$ while it is still $\NTk$.
\end{example}

It is known that every finite CSA group is abelian. The structure of finite CT groups are already determined by many people.  Here we show that finite $\CSNk$ groups are nilpotent of class at most $k$.

\begin{proposition}
Every finite $\CSNk$ group is nilpotent of class at most $k$.
\end{proposition}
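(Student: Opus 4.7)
The plan is to argue by contradiction: assume $G$ is a finite $\CSNk$ group that is not a $\nilk$-group and produce a maximal $\nilk$-subgroup of $G$ that must coincide with $G$. First I would pick any maximal $\nilk$-subgroup $H$ of $G$. Since $G$ is not $\nilk$, $H$ is a proper subgroup, and since every element of $G$ generates a cyclic (hence $\nilk$) subgroup, $H$ is nontrivial. By the $\CSNk$ hypothesis $H$ is malnormal, so $H$ is a proper nontrivial malnormal subgroup of a finite group, which is exactly the definition of a Frobenius complement.

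By Frobenius's theorem, $G = K \rtimes H$ where the Frobenius kernel $K$ is a normal subgroup of $G$ with $|K| = [G:H] > 1$. Thompson's theorem then tells us that $K$ is nilpotent, so its center $Z(K)$ is a nontrivial abelian characteristic subgroup of $K$. In particular $Z(K)$ is a nontrivial abelian normal subgroup of $G$, and being abelian it is itself a $\nilk$-subgroup.

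Now I would enlarge $Z(K)$ to a maximal $\nilk$-subgroup $M$ of $G$. Since $Z(K)$ is normal in $G$, for every $g \in G$ we have $Z(K) = Z(K)^g \subseteq M \cap M^g$, and because $Z(K) \neq 1$, malnormality of $M$ forces $g \in M$; thus $M = G$, making $G$ itself a $\nilk$-group and contradicting our assumption. The main obstacle, and the genuinely nontrivial input, is the combined use of Frobenius's theorem together with Thompson's theorem to supply a nontrivial abelian normal subgroup of $G$ sitting inside $K$; the rest is a direct adaptation of the classical CSA-to-abelian argument, where the decisive observation is simply that any nontrivial normal subgroup contained in a malnormal subgroup forces the malnormal subgroup to be the whole group.
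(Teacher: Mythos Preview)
Your proof is correct and follows essentially the same route as the paper: identify a maximal $\nilk$-subgroup as a Frobenius complement, invoke Frobenius--Thompson to obtain a nilpotent kernel, extract from it a nontrivial normal $\nilk$-subgroup, and then use malnormality of a maximal $\nilk$-subgroup containing it to force that subgroup to be all of $G$. Your choice of $Z(K)$ is a mild streamlining over the paper, which instead splits into cases on the nilpotency class $n$ of the kernel and uses $N$ or $\gamma_{n-k}(N)$ as the normal $\nilk$-subgroup.
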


\begin{proof}
Suppose $G$ is a finite $\CSNk$ group but not nilpotent. Let $H$ be a maximal $\nilk$-subgroup. As $H$ is malnormal, in the language of finite group theory, it is a Frobenius  group with a complement $H$. So there exists a Frobenius kernel $N$ which is a normal nilpotent subgroup of $G$ and we have the semidirect decomposition $G=H\ltimes N$ (see \cite{Isaacs} for details on Frobenius groups). Suppose $n$ is  the nilpotency class of of $N$. We consider two cases.\\

1- If $n\leq k$ then $N\subseteq M$ for some maximal $\nilk$-subgroup $M$. As $N$ is normal in $G$, for every $x\in G\setminus M$ we have $N^x=N$ and hence $M\cap M^x\neq 1$ which is impossible.\\

2- If $n>k$ then we consider the  $N_0=\gamma_{n-k}(N)$ which is a normal $\nilk$-subgroup of $G$. Again if we suppose that $N_0\subseteq M$ for some maximal $\nilk$-subgroup $M$, then $M\cap M^x$ will be non-trivial for every $x\in G\setminus M$, a contradiction. \\

As a result the group $G$ must be nilpotent. A similar argument as above shows that the class of $G$ is at most $k$.
\end{proof}

In \cite{Ciob} it is shown that a CT group $G$ is not CSA if and only if it has a non-ableian subgroup which contains a non-trivial abelian normal  subgroup. We show that this can be generalized to the case of $\NTk$ groups.

\begin{theorem}
An $\NTk$ group $G$ is not $\CSNk$ if and only if it has a subgroup $G_0$  which is not $\nilk$ and  $G_0$ itself contains a non-trivial $\nilk$-subgroup which is normal in $G_0$.
\end{theorem}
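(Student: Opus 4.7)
The plan is to prove both implications directly, leaning on the earlier observation that in an $\NTk$ group two distinct maximal $\nilk$-subgroups intersect trivially.

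For the forward direction, suppose $G$ is $\NTk$ but not $\CSNk$ and pick a maximal $\nilk$-subgroup $H$ that fails malnormality, so there is some $x\in G\setminus H$ with $H\cap H^x\neq 1$. The conjugate $H^x$ is again a maximal $\nilk$-subgroup, and its non-trivial intersection with $H$ forces $H=H^x$ by the earlier proposition; that is, $x$ normalizes $H$. I would then take $G_0=\langle H,x\rangle$. By construction $H$ is a non-trivial $\nilk$-subgroup which is normal in $G_0$, and since $x\in G_0\setminus H$ the group $G_0$ properly contains the maximal $\nilk$-subgroup $H$, so $G_0$ itself cannot be $\nilk$.

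For the converse, suppose $G_0\leq G$ is not $\nilk$ and contains a non-trivial $\nilk$-subgroup $N$ that is normal in $G_0$. Using Zorn's lemma, enlarge $N$ to a maximal $\nilk$-subgroup $H$ of $G$. Because $G_0$ is not $\nilk$, it cannot be contained in $H$, so there exists some $x\in G_0$ outside $H$. Normality of $N$ in $G_0$ gives $N=N^x\subseteq H^x$, and we already have $N\subseteq H$, hence $H\cap H^x\supseteq N\neq 1$. This shows that $H$ is not malnormal, so $G$ is not $\CSNk$.

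The real content lies in the forward direction, where the delicate point is extracting the fact that any offending $x$ must normalize $H$; once the earlier triviality-of-intersection proposition is available this is immediate, and I do not expect an obstacle beyond checking that $H^x$ is itself maximal $\nilk$ and that $G_0$ strictly contains $H$.
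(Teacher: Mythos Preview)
Your proof is correct, and the forward direction is genuinely different from the paper's.

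In the converse direction your argument is essentially the paper's, with only a cosmetic difference: the paper enlarges $N$ to a maximal $\nilk$-subgroup of $G_0$ and then appeals to the fact that $\CSNk$ passes to subgroups, whereas you enlarge inside $G$ and conclude directly. Both are fine.

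In the forward direction the approaches diverge. You exploit Proposition~2 (in an $\NTk$ group, two distinct maximal $\nilk$-subgroups intersect trivially): since $H^x$ is again maximal $\nilk$ and meets $H$, you get $H^x=H$ at once, so $G_0=\langle H,x\rangle$ with $H\trianglelefteq G_0$ works. The paper instead reads off a concrete witness from the failure of the sentence $\mathrm{Mal}$: it finds $x,y,z$ with $\langle x,y\rangle,\ \langle x,z^{-1}yz\rangle\in\nilk$ but $\langle x,z\rangle\notin\nilk$, sets $G_0=\langle x,y,z\rangle$, and then argues by repeated conjugation and the $\NTk$ property that the normal closure $\langle y^{G_0}\rangle$ is a $\nilk$-subgroup. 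Your route is shorter and more conceptual; the paper's route, though longer, produces a \emph{finitely generated} $G_0$ (indeed $3$-generated) together with a normal $\nilk$-subgroup that is the normal closure of a single element, which your $G_0=\langle H,x\rangle$ need not be when $H$ is not finitely generated.
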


\begin{proof}
First suppose that $G$ has a subgroup $G_0$ which is not $\nilk$ but contains a non-trivial $\nilk$-subgroup $A$ which is normal in $G_0$. Let $M$ be a maximal $\nilk$-subgroup of $G_0$ containing $A$. Then for all $x\in G_0\setminus M$ the intersection $M\cap M^x$ contains $A$ and so it is non-trivial. This shows that $G_0$ (and consequently $G$)  is not  $\CSNk$.

Now suppose that $G$ is $\NTk$ but not $\CSNk$.  This means that $G$ satisfies the first order sentences $\mathrm{Subgp}$ and $\mathrm{Nil}$ but not $\mathrm{Mal}$. Consequently, there are element $x, y, z\in G$ such that $x, y\neq 1$ and $Q(x, y)$ and $Q(x, z^{-1}yz)$ are true but $Q(x, z)$ is false. In other words $\langle x, y\rangle$ and $\langle x, z^{-1}yz\rangle$ are $\nilk$-subgroups but $\langle x, z\rangle$ is not. Suppose $G_0=\langle x, y, z\rangle$. Then $G_0$ is not a $\nilk$-subgroup. Let $A=\langle y^{G_0}\rangle$ be the normal closure of $\langle y\rangle$ in $G_0$. We prove that $A$ is a $\nilk$-subgroup. Note that
$$
A=\langle u^{-1}yu:\ u\in G_0\rangle.
$$
As $G$ is $\NTk$ and $\langle x, y\rangle$ and $\langle x, z^{-1}yz\rangle$ are $\nilk$, we conclude that the subgroup $\langle x, y, z^{-1}yz\rangle$ is also a $\nilk$. Conjugating with $x$, $y$, and $z$, respectively, we see that the following subgroups are also $\nilk$:
$$
\langle x, x^{-1}yx, x^{-1}z^{-1}yzx\rangle
$$
$$
\langle y^{-1}xy, y, y^{-1}z^{-1}yzy\rangle
$$
$$
\langle z^{-1}xz, z^{-1}yz, z^{-2}yz^2\rangle
$$
Applying the assumption of being $\NTk$ again the following subgroups are also $\nilk$:
$$
\langle x, y, x^{-1}yx, z^{-1}yz, x^{-1}z^{-1}yzx\rangle
$$
$$
\langle x, y, y^{-1}xy,  z^{-1}yz, y^{-1}z^{-1}yzy\rangle
$$
$$
\langle x, y, z^{-1}xz, z^{-1}yz, z^{-2}yz^2\rangle
$$
and so is the subgroup
$$
\langle y, x^{-1}yx, z^{-1}yz, x^{-1}z^{-1}yzx, y^{-1}z^{-1}yzy, z^{-2}yz^2\rangle.
$$
Conjugating again by $x$, $y$, and $z$ and using $\NTk$ assumption we see that the subgroup
$$
\langle u^{-1}yu:\ u\in G_0, |u|\leq 2\rangle
$$
is $\nilk$. Note that $|u|$ denotes the length of $u$ with respect to the generating set $S=\{ x, y, z\}$. We can continue this argument to show that the subgroup
$$
\langle u^{-1}yu:\ u\in G_0, |u|\leq n\rangle
$$
is $\nilk$ for every $n$. As a result $A$ itself is a $\nilk$-subgroup.
\end{proof}

\section{$A$-free and residually $A$-free groups}
Fix a finitely generated free element of the variety $\nilk$, say $A$. The free product of any non-empty family of copies of $A$ will be called an $A$-free group. For example, in the most trivial case, the concepts of  $\mathbb{Z}$-free group and  ordinary free group are equivalent. Note that every $A$-free group is $\CSNk$ by the proposition \ref{freeprod} as $A$ is torsion-free. A group $G$ is called residually $A$-free if for every non-identity element $g\in G$ there exists a homomorphism $\alpha$ from $G$ to some $A$-free group such that $\alpha(g)\neq 1$. It is called fully residually $A$-free if and only if for any finite set of non-identity elements $g_1, \ldots, g_n\in G$ there exists a homomorphism $\alpha$
from $G$ to some $A$-free group such that $\alpha(g_i)\neq 1$ for all $1\leq i\leq n$.

\begin{remark}
Thanks to a comment of Lee Mosher, for every $m$, the $A$-free group $\ast^m A$ can be embedded in  $A\ast A$: let $B$ and $C$ be two groups isomorphic to $A$. Then $A\ast A\cong B\ast C$. Suppose $a_1, \ldots, a_{m-1}\in A$ are distinct non-identity elements of $A$. Then the subgroup generated by $B, a_1^{-1}Ca_1, \ldots, a_{m-1}^{-1}Ca_{m-1}$ is isomorphic to $\ast^m A$. This shows that, a group $G$ is (fully) residually $A$-free, if for every non-identity element $g\in G$ (for every  finite set of non-identity elements $g_1, \ldots, g_n\in G$), there exists a homomorphism $\alpha:G\to A\ast A$ such that $\alpha(g)\neq 1$ (similarly, $\alpha(g_i)\neq 1$ for all $1\leq i\leq n$).
\end{remark}

\begin{proposition}\label{CSNk}
Let $G$ be  fully residually $A$-free group. Then $G$ is $\CSNk$.
\end{proposition}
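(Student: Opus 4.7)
The plan is to combine two ingredients that have already been established in the paper: that every $A$-free group is $\CSNk$, and that $\CSNk$ is axiomatizable by a single universal first-order sentence. A standard preservation argument will then lift this universal sentence from $A$-free groups through the fully residual embedding to $G$.

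First I would record the base case. Since $A$ is a finitely generated free object of the variety $\nilk$, it is torsion-free; in particular it has no involutions. By iterating Proposition~\ref{freeprod} across the free product decomposition of an $A$-free group, every $A$-free group $F$ is $\CSNk$, and hence satisfies the universal sentence $\Phi = \mathrm{Subgp}\wedge \mathrm{Nil}\wedge \mathrm{Mal}$ isolated in the previous section.

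Next I would write $\Phi$ in prenex form $\forall\bar{x}\,\psi(\bar{x})$ with $\psi$ quantifier-free, and, to show $G\models\Phi$, fix an arbitrary tuple $\bar{a}\in G$. I would enumerate the finitely many group words $w_1(\bar{x}),\ldots,w_m(\bar{x})$ appearing as arguments of the atomic predicate $\cdot\approx 1$ inside $\psi$, and set $I=\{i:w_i(\bar{a})\neq 1 \text{ in } G\}$. Using full residual $A$-freeness, I would choose a homomorphism $\alpha: G\to F$ into some $A$-free group $F$ with $\alpha(w_i(\bar{a}))\neq 1$ for every $i\in I$; for $i\notin I$ the equality $w_i(\alpha(\bar{a}))=\alpha(w_i(\bar{a}))=1$ holds automatically. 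Hence the equality pattern among the terms $w_i$ at $\bar{a}$ matches the corresponding pattern at $\alpha(\bar{a})$, which forces $\psi(\bar{a})$ to hold in $G$ iff $\psi(\alpha(\bar{a}))$ holds in $F$; and the latter is true because $F$ is $\CSNk$.

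The main obstacle is not mathematical but organizational: one must be careful to put $\Phi$ into a form in which every atomic subformula has the canonical shape $w(\bar{x})\approx 1$, so that the same choice of discriminating homomorphism $\alpha$ handles the sentences $\mathrm{Subgp}$, $\mathrm{Nil}$, and $\mathrm{Mal}$ uniformly. Once this bookkeeping is in place, the transfer from the class of $A$-free groups to the class of fully residually $A$-free groups is a routine instance of the general principle that universal sentences are preserved under discrimination.
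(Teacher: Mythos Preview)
Your argument is correct and follows the same underlying idea as the paper: both use that $\CSNk$ is axiomatized by the universal sentence $\mathrm{Subgp}\wedge\mathrm{Nil}\wedge\mathrm{Mal}$, that $A$-free groups satisfy it, and that a discriminating homomorphism into an $A$-free group preserves the truth value of each atomic subformula. The only difference is packaging---the paper verifies $\mathrm{Subgp}$, $\mathrm{Nil}$, and $\mathrm{Mal}$ one at a time with explicit finite test sets, whereas you handle the whole conjunction in a single pass via the general preservation of universal sentences under full residuality.
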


\begin{proof}
We need to show that $G$ satisfies the first order sentences $\mathrm{Subgp}$, $\mathrm{Nil}$, and  $\mathrm{Mal}$. We start with $\mathrm{Subgp}$. Let $a, b_1, b_2\in G$ such that $a\neq 1, Q(a, b_1)$, and $Q(a, b_2)$. Let $S$ be the finite set
$$
\{ a, b_1, b_2, [x_1, \ldots, x_{k+1}]:\ \{x_1, \ldots, x_{k+1}\}=\{a, b_1^{-1}b_2\}\}.
$$
Then there exists a homomorphism $\alpha:G\to A\ast A$ such that the restriction of $\alpha$ to $S$ is injective. In the group $A\ast A$ the sentences  $Q(\alpha(a), \alpha(b_1))$ and $Q(\alpha(a), \alpha(b_2))$ are true and  hence $Q(\alpha(a), \alpha(b_1^{-1}b_2))$ is also true in  $A\ast A$ as it is a $\CSNk$. This means that $[y_1, \ldots, y_{k+1}]=1$ for all choices of
$$
y_i\in \{ \alpha(a), \alpha(b_1^{-1}b_2)\}.
$$
As the restriction of $\alpha$ to $S$ is injective, we have
$$
\bigwedge_{x_1, \ldots, x_{k+1}\in \{a, b_1^{-1}b_2\}}[x_1, \ldots, x_{k+1}]=1.
$$
This shows that $Q(a, b_1^{-1}b_2)$ is true and hence $G$ satisfies $\mathrm{Subgp}$.

Now, we show that the sentence $\mathrm{Nil}$ is true in $G$. Let $a, b_1, \ldots , b_{k+1}\in G$ such that $a$ is a  non-identity element and we have $\bigwedge_{i=1}^{k+1}Q(a, b_i)$ but $[b_1, \ldots, b_{k+1}]\neq 1$. Consider the finite set
$$
S=\{ a, b_1, \ldots, b_{k+1}, [b_1, \ldots, b_{k+1}]\}.
$$
Again, there is  a homomorphism $\alpha:G\to A\ast A$ such that the restriction of $\alpha$ to $S$ is injective. We have
$$
\bigwedge_{i=1}^{k+1}Q(\alpha(a), \alpha(b_i))
$$
and consequently $[\alpha(b_1), \ldots, \alpha(b_{k+1})]=1$ as the $A$-free group $A\ast A$ is $\CSNk$. This shows that $[b_1, \ldots, b_{k+1}]=1$, a contradiction. As a result $G$ satisfies $\mathrm{Nil}$.

A similar argument shows that the sentence $\mathrm{Mal}$ is also true in $G$ and hence $G$ is a $\CSNk$ group.
\end{proof}

B. Baumslag showed in \cite{BB} that two properties of being CT and CSA are equivalent if the given group is residually free. Our main aim in this section is to prove its general form. A theorem of Jennings (see \cite{Hall}) says that every finitely generated torsion-free nilpotent group embeds into $\mathrm{U}_r(\mathbb{Z})$, the group of uni-modular upper triangular matrices for some $r$. This implies that such a group is linear and hence it is equationally noetherian in the sense of \cite{BMR}. Now, we use the unification theorem of \cite{DMR} to prove the next lemma.

\begin{lemma}
For any positive integer $n$ the group $A^n$ is fully residually $A$-free.
\end{lemma}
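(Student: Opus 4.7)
The plan is to apply the unification theorem of \cite{DMR} to the group $A^n$, after verifying two hypotheses: equational Noetherianness and residual $A$-freeness.

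I would first observe that $A^n$ is a finitely generated torsion-free nilpotent group of class at most $k$, since these properties are inherited by direct powers. By the Jennings embedding recalled in the paragraph just above the lemma, $A^n$ embeds into $U_r(\mathbb{Z})$ for some $r$, is therefore linear, and is consequently equationally Noetherian in the sense of \cite{BMR}. Separately, $A^n$ is residually $A$-free: every non-identity $g = (g_1, \ldots, g_n) \in A^n$ has some component $g_i \neq 1$, and the coordinate projection $\pi_i : A^n \to A$ provides a separating homomorphism into the $A$-free group $A$ (which in turn embeds into $A \ast A$ by the Remark above).

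With these two ingredients in hand, the unification theorem of \cite{DMR} upgrades residual separation to full discrimination once it is shown that $A^n$ lies in the universal closure of $A$, equivalently, satisfies $\mathrm{Th}_{\forall}(A)$. I would establish this by embedding $A^n$ into a non-principal ultrapower $A^* = A^{\mathbb{N}} / \mathcal{U}$: since such an ultrapower is elementarily equivalent to $A$ and since universal sentences descend to subgroups, any embedding $A^n \hookrightarrow A^*$ forces $A^n \models \mathrm{Th}_{\forall}(A^*) = \mathrm{Th}_{\forall}(A)$. The unification theorem then gives full discrimination of $A^n$ by $A$, and post-composing with the embedding $A \hookrightarrow A \ast A$ of the Remark delivers full discrimination by an $A$-free group, as required.

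The main obstacle is the construction of the ultrapower embedding $A^n \hookrightarrow A^*$. The naive diagonal map built from the projections $\pi_i$ is not injective modulo $\mathcal{U}$, because no single $\pi_i$ separates an arbitrary finite family of non-identity elements of $A^n$. The resolution is to exploit the saturation of $A^*$: as a torsion-free nilpotent group of class at most $k$ with unbounded Malcev rank, it realizes $n$ pairwise commuting copies of $A$ whose product is isomorphic to $A^n$. Building these commuting copies from non-standard Malcev coordinates in $A^*$, and verifying the finite-intersection property for the family of ``detecting sets'' supplied by the residual structure established in the first step, yields the required embedding and completes the proof.
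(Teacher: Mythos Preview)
Your proposal has a genuine gap in its final step. You claim that the ultrapower $A^{\ast}=A^{\mathbb{N}}/\mathcal{U}$ contains $n$ pairwise elementwise-commuting copies of $A$ whose product realises $A^{n}$. This fails as soon as $A$ is non-abelian, that is, whenever $k\geq 2$ and the free rank of $A$ is at least $2$. Take $A$ free nilpotent of class $2$ and rank $2$: there the centraliser of every non-central element is abelian, and this is expressed by the universal sentence
\[
\forall x,y,z,w\ \bigl(([x,z]\approx 1\wedge[x,w]\approx 1\wedge[z,w]\not\approx 1)\to[x,y]\approx 1\bigr),
\]
which therefore holds in $A^{\ast}$ as well. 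Consequently no non-abelian subgroup of $A^{\ast}$ can commute elementwise with another non-abelian subgroup, and your ``non-standard Malcev coordinates'' cannot manufacture the required commuting copies: the obstruction is first-order and survives passage to the ultrapower. Indeed, the displayed sentence holds in $A$ but fails in $A\times A$ via $x=(a,1)$, $y=(b,1)$, $z=(1,a)$, $w=(1,b)$, so the inclusion $\mathrm{Th}_{\forall}(A)\subseteq\mathrm{Th}_{\forall}(A^{n})$ that you are trying to derive from the embedding is not obtainable by this route.

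The paper's argument is altogether shorter and does not attempt any such construction: it simply asserts $\mathrm{Th}_{\forall}(A)\subseteq\mathrm{Th}_{\forall}(A^{n})$ as known, notes that $A^{n}$ is finitely generated, and invokes Theorem~A of \cite{DMR} to conclude directly that $A^{n}$ is fully residually $A$. There is no ultrapower embedding in the paper's proof, and your preliminary verification of (mere) residual $A$-freeness via coordinate projections plays no role there either; the universal-theory inclusion is taken as an input to the unification theorem rather than derived.
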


\begin{proof}
We know that $\mathrm{Th}_{\forall}(A)\subseteq \mathrm{Th}_{\forall}(A^n)$ and $A^n$ is finitely generated. Now using Theorem A of \cite{DMR}, we conclude that $A^n$ is fully residually $A$, i.e. for any set of non-identity elements $u_1, \ldots, u_m\in A^n$ there exists a homomorphism $\alpha:A^n\to A$ such that $\alpha(u_i)\neq 1$ for all $1\leq i\leq m$.
\end{proof}

As a result, we have the following.

\begin{corollary}
For any set $I$ the unrestricted direct power $A^I$ is fully residually $A$-free.
\end{corollary}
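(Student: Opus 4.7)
The plan is to reduce the statement directly to the preceding lemma by projecting onto finitely many coordinates. Given non-identity elements $g_1, \ldots, g_m \in A^I$, I would first select for each $j$ an index $i_j \in I$ at which the $i_j$-coordinate of $g_j$ is not the identity of $A$; such an index exists precisely because $g_j \neq 1$. Setting $J = \{i_1, \ldots, i_m\}$ and $n = |J| \leq m$, the natural coordinate projection $\pi\colon A^I \to A^J \cong A^n$ is a surjective homomorphism and by construction $\pi(g_j) \neq 1$ for every $j$, since its $i_j$-coordinate remains non-identity.

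Next I would invoke the preceding lemma on the finite list of non-identity elements $\pi(g_1), \ldots, \pi(g_m) \in A^n$ to obtain a homomorphism $\alpha\colon A^n \to A$ with $\alpha(\pi(g_j)) \neq 1$ for all $1 \leq j \leq m$. Since $A$ is itself an $A$-free group in the sense of the paper, being the trivial free product of a single copy of $A$, the composition $\alpha \circ \pi\colon A^I \to A$ is a homomorphism to an $A$-free group whose image of every $g_j$ is non-trivial. This is exactly the condition required for $A^I$ to be fully residually $A$-free.

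There is no real obstacle here: the unrestricted direct power is fully residually its finite direct powers via coordinate projections, and the preceding lemma already supplies the non-trivial content (the finite case, settled using equational noetherianity of linear groups via Jennings and the unification theorem of \cite{DMR}). The only point to flag is the mild semantic observation that a single copy of $A$ qualifies as an $A$-free group under the definition adopted in this section, so the target of $\alpha$ requires no further embedding into $A \ast A$ or larger free products.
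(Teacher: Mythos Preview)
Your argument is correct and is essentially identical to the paper's: both select a finite index set on which all the given non-identity elements survive the coordinate projection, apply the preceding lemma to the resulting finite direct power to obtain a homomorphism into $A$, and compose. Your remark that a single copy of $A$ already counts as an $A$-free group is accurate and mirrors the paper's implicit use of this fact.
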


\begin{proof}
Let $u_1, \ldots, u_m\in A^I$ be arbitrary elements. Then there is a finite subset $I_0\subseteq I$ such that all projections
$$
\pi_{I_0}(u_1)=(u_1)_{|_{I_0}}, \ldots, \pi_{I_0}(u_m)=(u_m)_{|_{I_0}}
$$
are non-identity. By the previous lemma, there is a homomorphism $\alpha_0:A^{I_0}\to A$ such that
$$
\alpha_0(\pi_{I_0}(u_1))\neq 1, \ldots, \alpha_0(\pi_{I_0}(u_m))\neq 1.
$$
If we consider the homomorphism $\alpha=\alpha_0\circ \pi_{I_0}: A^I\to A$, we will see that $\alpha(u_i)\neq 1$ for all $1\leq i\leq m$.
\end{proof}

\begin{lemma}
Let $G$ be a $\nilk$-group which is residually $A$-free. Then $G$ is fully residually $A$-free.
\end{lemma}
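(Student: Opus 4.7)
The plan is to reduce to homomorphisms whose target is the single group $A$ and then to invoke the preceding lemma on $A^n$. Specifically, I would first show that for any homomorphism $\alpha\colon G\to H$ from the $\nilk$-group $G$ into an $A$-free group $H=\ast_{i\in I}A_i$, the image $\alpha(G)$ is (up to isomorphism) a subgroup of $A$. By the Kurosh subgroup theorem, $\alpha(G)\le H$ has the form $F[X]\ast(\ast_{j}A_{i_j}^{u_j})$. Since a free product of two non-trivial groups is never nilpotent, the only nilpotent possibilities are that $\alpha(G)$ is trivial, infinite cyclic (coming from $|X|=1$ with no conjugate factors), or a single conjugate $A_{i_j}^{u_j}$ of a subgroup of $A$. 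In every case $\alpha(G)$ embeds into $A$, using in the cyclic case that $A$, being a finitely generated free element of $\nilk$, is torsion-free and therefore contains a copy of $\mathbb{Z}$. Composing $\alpha$ with such an embedding produces a homomorphism $\tilde\alpha\colon G\to A$ with the same kernel as $\alpha$.

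Given non-identity elements $g_1,\ldots,g_n\in G$, residual $A$-freeness supplies homomorphisms $\alpha_i\colon G\to H_i$ with $\alpha_i(g_i)\neq 1$, and by the previous reduction we can replace each $\alpha_i$ by $\tilde\alpha_i\colon G\to A$ with $\tilde\alpha_i(g_i)\neq 1$. I then form the diagonal homomorphism $\beta\colon G\to A^n$ defined by $\beta(g)=(\tilde\alpha_1(g),\ldots,\tilde\alpha_n(g))$, whose $i$-th coordinate is non-trivial at $g_i$, so $\beta(g_i)\neq 1$ for every $i$. Now invoke the preceding lemma: $A^n$ is fully residually $A$-free, so applied to the non-identity elements $\beta(g_1),\ldots,\beta(g_n)\in A^n$ it yields a homomorphism $\gamma\colon A^n\to A$ with $\gamma(\beta(g_i))\neq 1$ for all $i$. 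Then $\gamma\circ\beta\colon G\to A$ is the required homomorphism into an $A$-free group simultaneously separating $g_1,\ldots,g_n$ from the identity.

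The main obstacle is the first reduction: proving that the image of the $\nilk$-group $G$ under any homomorphism into an $A$-free group lies, up to isomorphism, inside a single copy of $A$. This rests on the structural observation that a non-trivial free product of two non-trivial groups cannot be nilpotent, which forces the Kurosh decomposition of $\alpha(G)$ to collapse to one of the three trivial shapes above. Once this reduction is in hand, the remaining steps form a clean diagonal-plus-composition argument that directly transfers the full residual separation enjoyed by $A^n$ back to $G$.
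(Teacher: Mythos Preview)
Your argument is correct and follows essentially the same route as the paper: reduce the image of any homomorphism into an $A$-free group to a subgroup of $A$ via the Kurosh decomposition (a nilpotent subgroup of a free product must collapse to a single factor or be cyclic), then pass through a diagonal map into a direct power of $A$ and use the full residual $A$-freeness of such powers. The only cosmetic difference is that the paper embeds $G$ once into the full power $A^G$ and invokes the corollary on $A^I$, whereas you work one finite set at a time with $A^n$ and invoke the lemma on $A^n$ directly.
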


\begin{proof}
For any non-identity element $x\in  G$ there exists  a homomorphism $\alpha_x:G\to A\ast A$ such that $x$ does not belong to $\ker(\alpha_x)$. Suppose $N_x=\ker(\alpha_x)$. We know that the quotient $G/N_x$ embeds in $A\ast A$ but in the same time it is nilpotent. This shows that $G/N_x$ must be a subgroup of a conjugate of $A$ in the free product $A\ast A$. As a result $G/N_x$ embeds in $A$. Now, we have
$$
G\hookrightarrow \prod_{x\in G}\frac{G}{N_x}\hookrightarrow\prod_{x\in G}A=A^G.
$$
But we saw that $A^G$ is fully residually $A$-free, and so is $G$.
\end{proof}

In what follows $Z_k(G)$ is the $k$-th term of the upper central series of the group $G$.

\begin{lemma}
Let $G$ be a residually $A$-free group and $H$ be a normal $\nilk$-subgroup. Then $H\subseteq Z_k(G)$.
\end{lemma}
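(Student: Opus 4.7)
The plan is to reduce the containment $H\subseteq Z_k(G)$ to the statement that every iterated commutator $[h,g_1,\ldots,g_k]$ with $h\in H$ and $g_1,\ldots,g_k\in G$ is trivial, and then to argue this by contradiction using the residual $A$-freeness of $G$ together with the key observation that a $\CSNk$ group cannot contain a non-trivial normal $\nilk$-subgroup unless it is itself $\nilk$.

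Concretely, I would fix $h\in H$ and $g_1,\ldots,g_k\in G$, set $c=[h,g_1,\ldots,g_k]$, and suppose toward a contradiction that $c\neq 1$. By residual $A$-freeness, I can choose a homomorphism $\alpha\colon G\to A\ast A$ with $\alpha(c)\neq 1$. Since $H$ is normal in $G$, the image $\alpha(H)$ is normal in $\alpha(G)$, and being a homomorphic image of the $\nilk$-group $H$, it is again a $\nilk$-subgroup. Moreover $\alpha(H)$ must be non-trivial: otherwise $\alpha(h)=1$ and hence $\alpha(c)=1$, a contradiction.

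The heart of the argument is then to show that $\alpha(G)$ is itself $\nilk$. By Proposition~\ref{CSNk} the ambient free product $A\ast A$ is $\CSNk$, and the earlier Proposition on subgroups ensures that $\alpha(G)\leq A\ast A$ inherits the $\CSNk$ property. Now choose, by Zorn's lemma, a maximal $\nilk$-subgroup $M$ of $\alpha(G)$ containing $\alpha(H)$. For any $g\in\alpha(G)$ the normality of $\alpha(H)$ gives
\[
M\cap M^{g}\supseteq\alpha(H)^{g}=\alpha(H)\neq 1,
\]
and malnormality of $M$ in $\alpha(G)$ forces $g\in M$. Hence $M=\alpha(G)$, so $\alpha(G)$ lies in $\nilk$. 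But then the simple commutator $\alpha(c)=[\alpha(h),\alpha(g_1),\ldots,\alpha(g_k)]$ is an iterated commutator of length $k+1$ in a nilpotent group of class at most $k$, so it must vanish, contradicting $\alpha(c)\neq 1$.

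The main obstacle I anticipate is bookkeeping around the fact that $\alpha(G)$ need not equal the whole group $A\ast A$; the crucial inputs are therefore the hereditary nature of $\CSNk$ for subgroups and the observation that \emph{any} $\CSNk$ group containing a non-trivial normal $\nilk$-subgroup must itself be $\nilk$. Once these two facts are lined up, the argument is entirely forced: the residual $A$-freeness produces a homomorphism that detects the putative non-trivial commutator, and the $\CSNk$ structure of its image immediately annihilates that commutator.
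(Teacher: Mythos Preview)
Your argument is correct, but it reaches the contradiction by a different mechanism than the paper. Both proofs begin identically: assume some commutator $c=[h,g_1,\ldots,g_k]$ is non-trivial, push it through a homomorphism $\alpha\colon G\to A\ast A$ with $\alpha(c)\neq 1$, and observe that $\alpha(H)$ is a non-trivial normal $\nilk$-subgroup of $K=\alpha(G)$. At this point the paper argues structurally via the Kurosh subgroup theorem: since $K$ contains a non-trivial commutator of weight $k+1$, it cannot be cyclic or lie in a conjugate of $A$, so its Kurosh decomposition is a genuine free product with at least two factors; but then the nilpotent subgroup $\alpha(H)$ must sit inside a conjugate of a single factor and therefore cannot be normal in $K$. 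You instead invoke the hereditary $\CSNk$ property for subgroups of $A\ast A$ and run the malnormality argument (essentially the easy direction of Theorem~1) to force $K$ itself to be $\nilk$, which kills $\alpha(c)$ directly. Your route is tidier in that it stays entirely within the $\CSNk$ formalism already developed and never touches Kurosh explicitly; the paper's route is more concrete and exhibits the actual free-product obstruction. One minor remark: the fact that $A\ast A$ is $\CSNk$ follows more directly from Proposition~\ref{freeprod} (since $A$ is torsion-free and $\nilk$, hence trivially $\CSNk$) than from Proposition~\ref{CSNk}.
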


\begin{proof}
Let $H\nsubseteq Z_k(G)$ and suppose $a\in H\setminus Z_k(G)$. Then there are elements $g_1, \ldots, g_k\in G$ such that $x=[a, g_1, \ldots, g_k]\neq 1$. As $G$ is residually $A$-free, there exists a homomorphism $\alpha:G\to A\ast A$ such that $\alpha(x)\neq 1$. Hence
$$
[\alpha(a), \alpha(g_1), \ldots, \alpha(g_k)]\neq 1.
$$
This shows that $K=\mathrm{Im}(\alpha)$ is not nilpotent and so by Kurosh subgroup theorem, $K$ must be a free product of at least two factors. Now, $1\neq \alpha(H)$ is a normal subgroup of $K$ and it is $\nilk$. So, $\alpha(H)$ is a subgroup of a conjugate of some free factor of $K$. But, in this case it can not be a normal subgroup of $K$.
\end{proof}

Now we are ready to prove our main theorem.

\begin{theorem}
A group $G$ is fully residually $A$-free if and only if it is $\NTk$ and residually $A$-free.
\end{theorem}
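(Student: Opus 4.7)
\medskip
\noindent\textbf{Plan of proof.} The easy direction is immediate: a fully residually $A$-free group is residually $A$-free, and by Proposition \ref{CSNk} it is $\CSNk$, hence $\NTk$ by Proposition \ref{first}. So the content is the converse: assuming $G$ is $\NTk$ and residually $A$-free, show that $G$ is fully residually $A$-free.

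If $G$ itself lies in $\nilk$, the preceding lemma already supplies the conclusion; so I may assume $G$ is not $\nilk$. Then the earlier proposition asserting that $Z(G)=1$ for $\NTk$ groups outside $\nilk$ applies, and a short induction on the upper central series yields $Z_k(G)=1$. Combined with the preceding lemma on normal $\nilk$-subgroups embedding in $Z_k(G)$, this gives the \emph{key structural fact}: every normal $\nilk$-subgroup of $G$ is trivial.

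I reduce full residual $A$-freeness to the claim that for any non-identity $g_1,\ldots,g_n \in G$ the intersection $\bigcap_{i=1}^{n}\langle g_i^G\rangle$ contains a non-identity element $h$. Once this is available, residual $A$-freeness applied to $h$ produces $\alpha\colon G\to A\ast A$ with $\alpha(h)\neq 1$, which forces $\alpha(g_i)\neq 1$ for every $i$ because $h$ is a product of conjugates of $g_i^{\pm 1}$. The claim is proved by induction on $n$: the base $n=1$ is trivial, and for $n\geq 3$ one applies the pair case to $(g_1,g_2)$ to produce a non-identity $h_{12}\in\langle g_1^G\rangle\cap\langle g_2^G\rangle$, then invokes the induction hypothesis on the $n-1$ elements $(h_{12},g_3,\ldots,g_n)$; the resulting element lies in the full intersection since $\langle h_{12}^G\rangle\subseteq\langle g_1^G\rangle\cap\langle g_2^G\rangle$ (the right side is normal and contains $h_{12}$).

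The pair case $n=2$ is the heart of the argument. If $\langle g_1,g_2\rangle$ is not $\nilk$, some simple commutator $c=[x_1,\ldots,x_{k+1}]$ with each $x_j\in\{g_1,g_2\}$ is non-trivial; any such $c$ must involve both $g_1$ and $g_2$ (otherwise it is a simple commutator of a single element, hence trivial), and a routine induction on commutator length places it in $\langle g_1^G\rangle\cap\langle g_2^G\rangle$. If $\langle g_1,g_2\rangle$ is $\nilk$, both elements lie in the maximal $\nilk$-subgroup $H=C^k_G(g_1)$; its normal core $H_G=\bigcap_{z\in G}H^z$ is a normal $\nilk$-subgroup of $G$, hence trivial by the key structural fact. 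So some conjugate $g_2^z$ lies outside $H$, meaning $\langle g_1,g_2^z\rangle$ is not $\nilk$, and the previous sub-case applied to $(g_1,g_2^z)$ delivers a non-identity element of $\langle g_1^G\rangle\cap\langle(g_2^z)^G\rangle=\langle g_1^G\rangle\cap\langle g_2^G\rangle$. The main obstacle is precisely this sub-case: the CSA-style malnormality used in Baumslag's original argument is unavailable here, and its role is taken over by triviality of the core of $C^k_G(g_1)$, which is why the consequence $Z_k(G)=1$ is essential.
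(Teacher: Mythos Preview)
Your proof is correct and follows essentially the same approach as the paper: the case split on whether $G\in\nilk$, the key structural fact that a non-$\nilk$ residually $A$-free $\NTk$ group has no non-trivial normal $\nilk$-subgroup, the conjugation trick to force $\langle g_1,g_2^{z}\rangle\notin\nilk$, and the extraction of a non-trivial length-$(k+1)$ commutator are all exactly as in the paper. Your phrasing of the induction via a non-identity element of $\bigcap_i\langle g_i^{G}\rangle$ is a clean repackaging of the paper's direct induction on the number of elements (the paper merges $g_1$ and $g_n$ into the commutator and inducts on $g_2,\ldots,g_{n-1},c$), but the underlying argument is identical.
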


\begin{proof}
We already proved that all fully residually $A$-free groups are $\CSNk$ (see the proposition \ref{CSNk}) and hence they are $\NTk$ and residually $A$-free. So we prove the converse. Suppose $G$ is $\NTk$ and residually $A$-free. Let $Z_k(G)\neq 1$. Then there exists an element $a\neq 1$ such that for all $g_1, \ldots, g_k\in G$ we have $[a, g_1, \ldots, g_k]=1$. Therefore, for any $b$, the subgroup $\langle a, b\rangle$ is $\nilk$. By the assumption of being $\NTk$ we see that for all $x_1, \ldots, x_{k+1}$ the subgroup $\langle a, x_1, \ldots, x_{k+1}\rangle$ is also $\nilk$ and hence
$$
[x_1, \ldots, x_{k+1}]=1.
$$
This shows that $G$ is a $\nilk$-group and hence it is fully residually $A$-free by the previous lemma.

So, we may assume that $Z_k(G)=1$ and this implies that there is no normal $\nilk$-subgroup except the identity. We proceed by induction: let for any non-identity elements $x_1, \ldots, x_{n-1}$ there exists a homomorphism $\alpha:G\to A\ast A$ such that
$$
\alpha(x_1)\neq 1, \ldots, \alpha(x_{n-1})\neq 1.
$$
Now suppose $g_1, \ldots, g_n\in G$ are non-identity. Assume that  for all $x$ we have $g_n^x\in C^k_G(g_1)$. This means that $H=\langle g_n^G\rangle$ is included in $C^k_G(g_1)$ and hence it is a normal $\nilk$-subgroup. Consequently we must have $H=1$ which is not possible as $g_n\neq 1$. Therefore, there is an element $x$ such that $\langle g_1, g_n^x\rangle$ is not $\nilk$ and hence there is a commutator
$$
[u_1, \ldots, u_{k+1}]\neq 1
$$
with $\{ u_1, \ldots, u_{k+1}\}=\{ g_1, g_n^x\}$. Consider the finite set
$$
S=\{ g_2, \ldots, g_{n-1}, [u_1, \ldots, u_{k+1}]\}.
$$
By the inductive hypothesis, there is a homomorphism $\alpha:G\to A\ast A$ such that
$$
\alpha(g_2)\neq 1, \ldots, \alpha(g_{n-1})\neq 1, [\alpha(u_1), \ldots, \alpha(u_{k+1})]\neq 1.
$$
This implies  that
$$
\alpha(g_1)\neq 1, \ldots,  \alpha(g_n)\neq 1,
$$
so $G$ is fully residually $A$-free.
\end{proof}

As a results, we have:

\begin{corollary}
Let $G$ be a $\NTk$ group. If there is a finitely generated free element $A$ in the variety $\nilk$ such that $G$ is residually $A$-free, then $G$ is $\CSNk$.
\end{corollary}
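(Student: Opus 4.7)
The plan is to chain together the two main results that immediately precede the corollary, so no new machinery should be needed. I will start from the hypothesis that $G$ is $\NTk$ and residually $A$-free, and my goal is to conclude that every maximal $\nilk$-subgroup of $G$ is malnormal.

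First I would invoke the main theorem of the section, which asserts the equivalence between being fully residually $A$-free and being simultaneously $\NTk$ and residually $A$-free. Since both hypotheses of that theorem are directly given, this step is automatic and upgrades residual $A$-freeness to full residual $A$-freeness for $G$.

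Next I would apply Proposition \ref{CSNk}, which says that any fully residually $A$-free group is $\CSNk$. Composing the two implications yields the desired conclusion: $G$ is $\CSNk$.

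There is essentially no obstacle here; the corollary is a formal consequence of the two results just established, and its role is to package them in a form that takes $\NTk$ as the starting datum rather than full residual $A$-freeness. The only thing worth spelling out in the written proof is that the hypotheses of the main theorem are exactly what is being assumed, so that the reader sees the two-step reduction \emph{($\NTk$ $+$ residually $A$-free) $\Rightarrow$ fully residually $A$-free $\Rightarrow$ $\CSNk$} without having to reconstruct it.
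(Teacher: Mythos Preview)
Your proposal is correct and matches the paper's approach exactly: the paper states this corollary with no explicit proof, writing only ``As a results, we have'', so it is treated as the immediate two-step consequence (main theorem, then Proposition~\ref{CSNk}) that you describe.
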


\section{Supplementary notes}

As we mentioned before, the idea of $\CSNk$ and $\NTk$ groups can be formulated in more general frame of inductive, subgroup closed classes. For example consider the class of locally nilpotent groups. One can rewrite the  proof of the proposition \ref{first} in appropriate form to conclude the following statement.

\begin{proposition}
Let every maximal locally nilpotent subgroup of a group $G$ be malnormal. Then for every pair $K_1$ and $K_2$ of locally nilpotent subgroups of $G$, $K_1\cap K_2\neq 1$ implies that $\langle K_1, K_2\rangle$ is also locally nilpotent.
\end{proposition}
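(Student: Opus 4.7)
The plan is to mimic the argument of Proposition \ref{first}, making allowance for the fact that the nilpotency class is no longer a uniform bound but depends on the pair of elements under consideration. The skeleton is: find a maximal locally nilpotent subgroup containing $K_1$, use non-triviality of $K_1 \cap K_2$ together with malnormality to absorb $K_2$ into it, and then invoke the fact that local nilpotency is subgroup-closed.

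The first step is to apply Zorn's lemma to produce a maximal locally nilpotent subgroup $H$ containing $K_1$. For this one verifies that the union of a chain of locally nilpotent subgroups is itself locally nilpotent, since any finite subset of the union already lies in a single member of the chain and therefore generates a nilpotent subgroup. By hypothesis $H$ is then malnormal in $G$.

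The heart of the proof is to show $K_2 \subseteq H$. Pick any $z \in K_2$ and a non-identity element $y \in K_1 \cap K_2$. The two-generator subgroup $\langle y, z \rangle \subseteq K_2$ is finitely generated and locally nilpotent, hence nilpotent of some class $k$ depending on $y$ and $z$; so $[z,_k y] = 1$. From this identity the downward induction of Proposition \ref{first} applies verbatim: the relation $[z,_{k-1} y]^{-1} y^{-1} [z,_{k-1} y] = y^{-1}$ places the non-identity element $y^{-1}$ simultaneously in $H$ and in $H^{[z,_{k-1} y]}$, whence malnormality of $H$ forces $[z,_{k-1} y] \in H$; iterating the same computation produces $[z,_j y] \in H$ for $j = k-1, k-2, \ldots, 1$ and finally gives $z \in H$. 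Since $z \in K_2$ was arbitrary, $K_2 \subseteq H$, so $\langle K_1, K_2 \rangle \subseteq H$ is locally nilpotent, as being locally nilpotent is inherited by subgroups.

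The only delicate point, compared with Proposition \ref{first}, is that the integer $k$ is supplied after $y$ and $z$ have been chosen rather than fixed in advance for the whole group. Each step of the induction however uses only the malnormality of $H$ and a single previously established commutator membership, so the length of the chain does not need to be uniformly bounded and I do not expect any genuine obstacle beyond this minor piece of bookkeeping.
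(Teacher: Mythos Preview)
Your proof is correct and is exactly the adaptation the paper has in mind: the paper does not give a separate argument but simply instructs the reader to rewrite the proof of Proposition~\ref{first} ``in appropriate form,'' and your version---checking the chain condition so that Zorn produces a maximal locally nilpotent $H\supseteq K_1$, then choosing $k$ as the nilpotency class of the finitely generated subgroup $\langle y,z\rangle\leq K_2$ and running the identical commutator descent via malnormality---is precisely that rewrite. The observation that $k$ now depends on the pair $(y,z)$ rather than being global is the only new bookkeeping, and you have handled it correctly.
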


It is straightforward to check that all statements of the first section are valid for the class of locally nilpotent groups after some small modifications. Let's denote this class by $\mathrm{LN}$. For an arbitrary group $G$ and every element $x\in G$, we define 
$$
C^{\ast}_G(x)=\{ y\in G:\ \langle x, y\rangle=\mathrm{nilpotent}\}.
$$
This plays the role of $C^k_G(x)$ in the first section: a group $G$ is $\mathrm{LNT}$ if and only if for every non-identity element $x$ the set $C^{\ast}_G(x)$ is a locally nilpotent subgroup of $G$. In this case, maximal locally nilpotent subgroups of $G$ are precisely the subgroups $C^{\ast}_G(x)$ for $x\neq 1$. All proofs are almost the same as in the case of the variety $\mathfrak{N}_k$. One can axiomatize the classes $\mathrm{LNT}$ and $\mathrm{CSLN}$ using universal first order sentences of infinite length. For $x_1, \ldots, x_n\in G$ consider the following infinite sentence
$$
Q^{\ast}_n(x_1,\ldots, x_n):\ \bigvee_{k=1}^{\infty}\bigwedge_{y_1, \ldots, y_{k+1}\in \{ x_1, \ldots, x_n\} }([y_1, \ldots, y_{k+1}]\approx 1).
$$
So, $Q^{\ast}_n(x_1, \ldots, x_n)$ is true in $G$ if and only if $\langle x_1, \ldots, x_n\rangle $ is  nilpotent. Now, consider the following infinite sentences.
$$
(\mathrm{Subgp}_{\ast})\ \forall x\forall y_1, y_2: ((x\not\approx 1\wedge Q_2^{\ast}(x, y_1)\wedge Q_2^{\ast}(x, y_2))\to Q_2^{\ast}(x, y_1^{-1}y_2))
$$
$$
(\mathrm{Nil}_n)\ \forall x\forall x_1, \ldots, x_n: ((x\not\approx 1)\wedge \bigwedge_{i=1}^nQ_2^{\ast}(x, x_i))\to Q_n^{\ast}(x_1, \ldots, x_n))
$$
$$
(\mathrm{Mal}_{\ast})\ \ \ \forall x, y, z: ((x, y\not \approx 1)\wedge Q_2^{\ast}(x, y)\wedge Q_2^{\ast}(x, y^z))\to Q_2^{\ast}(x, z)).
$$
Then, the class $\mathrm{LNT}$ is axiomatized by $\mathrm{Subgp}_{\ast}+\mathrm{Nil}_n$ for $n\geq 2$. The class $\mathrm{CSLN}$ is also axiomatized by $\mathrm{Subgp}_{\ast}+\mathrm{Mal}_{\ast}+\mathrm{Nil}_n$ for $n\geq 2$.

At the moment, we don't have any idea of developing results of the second section to the class $\mathrm{LN}$. It seems that the study of the same problems for other classes might be interesting, although it is not clear that if the same statements are  true for arbitrary class $\mathfrak{X}$ or not. Two special cases are the Burnside variety $\mathrm{B}_n$ and the variety of metabelian groups $\mathfrak{A}_2$. None of our arguments can be applied for these cases.

When I was finalizing this manuscript, I saw the reference \cite{Cost} where the notion of CT groups is generalized as follows: a group $G$ is called $\nilk T$ if for any three non-identity elements $x$, $y$, and $z$, the two generator subgroup $\langle x, z\rangle$ is nilpotent of class at most $k$, when $\langle x, y\rangle$ and $\langle y, z\rangle$ are so. Although this  property is weaker than our $\NTk$,  the paper \cite{Cost} provides a detailed description of finite and even locally finite $\nilk T$ groups. Consequently, \cite{Cost} can be used to study of finite and locally finite $\NTk$ groups in the forthcoming works.

\end{document}